\theoremstyle{plain}
\newtheorem{thm}{Theorem}
\newtheorem{prop}[thm]{Proposition}
\newtheorem{crl}[thm]{Corollary}
\theoremstyle{definition}
\theoremstyle{remark}
\newtheorem{exm}[thm]{Example}
\begin{document}


\thispagestyle{empty}
\title{About the solvability of matrix polynomial equations}

\author{Tim Netzer}
\address{T.N., Universit\"at Innsbruck, 6020 Innsbruck, Austria}
\email{tim.netzer@uibk.ac.at}
\author{Andreas Thom  }
\address{A.T., TU Dresden, 01062 Dresden, Germany}
\email{andreas.thom@tu-dresden.de}
\begin{abstract}
We study self-adjoint matrix polynomial equations in a single variable and prove existence of self-adjoint solutions under some assumptions on the leading form. Our main result is that any self-adjoint matrix polynomial equation of odd degree with non-degenerate leading form can be solved in self-adjoint matrices. We also study equations of even degree and equations in many variables.
\end{abstract}

\maketitle


Let $A,B,C,D,E$ be self-adjoint $(n \times n)$-matrices. Is there a self-adjoint $(n \times n)$-matrix $X$ such that equation
$$XAXAX + X^2 + BXCXB + DXD -X + E = 0$$ holds?
Even though this is a problem that belongs to basic algebra, there seems to be no satisfactory theory that studies the solvability of matrix polynomial equations in this generality. We will come back to this specific question after we stated Corollary \ref{main2}.

\vspace{0.2cm}

In this note, we want to study some aspects of the theory of  matrix polynomial equations. Throughout this paper let $n \in \mathbb N$ be fixed. We denote by $M_n (\mathbb C)$ the algebra of $(n\times n)$-matrices over $\mathbb C$ and study the \emph{polynomial algebra} $${\rm Pol}(M_n (\mathbb C)) = \bigoplus_{d=0}^{\infty} M_n(\mathbb C)^{\otimes d+1}$$
endowed with usual addition and multiplication given by
$$(A_0 \otimes \cdots \otimes A_d)\cdot (B_0 \otimes  \cdots \otimes B_{d'})= A_0 \otimes \cdots \otimes A_{d-1} \otimes A_dB_0 \otimes B_1 \otimes \cdots \otimes B_{d'}.$$
This way, ${\rm Pol}(M_n (\mathbb C))$ is a graded unital ring and we may homomorphically map the ring ${\rm Pol}(M_n (\mathbb C))$ to the ring of ($n \times n$)-matrix polynomials over the vector space $M_n(\mathbb C)$ with multiplication given by matrix multiplication. Indeed, it is easy to see that a natural homorphism of rings is given by
$${\rm Pol}(M_n (\mathbb C)) \ni A_0 \otimes \cdots \otimes A_d \mapsto \{M_n(\mathbb C) \ni X \mapsto A_0XA_1 \cdots A_{d-1}XA_d \in M_n(\mathbb C)\}.$$ Note that there are non-trivial elements, for example $e_{11} \otimes e_{22} \otimes e_{11} - e_{12} \otimes e_{11} \otimes e_{21} \in {\rm Pol}(M_n(\mathbb C))$ that give rise to the trivial polynomial map on $M_n(\mathbb C)$.
However, by abuse of notation, we will sometimes identify an element in ${\rm Pol}(M_n (\mathbb C))$ with the associated polynomial map on $M_n(\mathbb C)$.

We will be concerned with the question of the solvability of self-adjoint polynomial equation associated with $p \in {\rm Pol}(M_n (\mathbb C))$ in self-adjoint matrices. In general, this is a very complicated question, but we can make some progress assuming additional conditions on the leading form. We say that $p \in {\rm Pol}(M_n (\mathbb C))$ is of degree $d$ if its highest degree contribution is of degree $d$, where we agree for convenience that the zero polynomial is of degree $- \infty$. The leading form of a polynomial $p \in {\rm Pol}(M_n (\mathbb C))$ of degree $d$ is its component in degree $d$, which we denote by $p^{(d)}$. The leading form of any non-zero polynomial is a homogeneous matrix polynomial. 

We also consider the involution on ${\rm Pol}(M_n (\mathbb C))$ which extends the adjoint $A \mapsto A^*$ on the part of degree zero. More concretely, the involution is given by
$$A_0 \otimes \cdots \otimes A_d \mapsto A_d^* \otimes \cdots \otimes A_0^*$$
on the part of degree $d$. We say that a matrix polynomial $p \in {\rm Pol}(M_n (\mathbb C))$ is self-adjoint if $p^*=p$. The definition of the involution on ${\rm Pol}(M_n (\mathbb C))$ is made in such a way that $p(X)^*=p^*(X^*)$ for all $X \in M_n(\mathbb C)$. In particular, if $X$ and $p$ are self-adjoint, then $p(X)$ is self-adjoint as well. We denote the real-linear subspace of self-adjoint matrices by $M_n(\mathbb C)_{h}$. A polynomial $p \in {\rm Pol}(M_n (\mathbb C))$ is self-adjoint if is has real coefficients when expressed as a polynomial in the natural coordinates on $M_n(\mathbb C)_{h}$.

We say that a self-adjoint homogeneous polynomial $p \in {\rm Pol}(M_n(\mathbb C))$ is non-degenerate if $p(X)=0$ for $X=X^*$ only if $X=0$. 
In our study, the unit sphere 
$${\mathbb S}(M_n(\mathbb C)_{h}) = \left\{X \in M_n(\mathbb C)_{h} \mid \sum_{i,j=1}^n |x_{ij}|^2 = n \right\}$$ with respect to the norm  where $\|X\|^2= \frac1n  \sum_{i,j=1}^n |x_{ij}|^2 $ will play a major role. Note that each homogeneous and non-degenerate polynomial map $p \colon M_n(\mathbb C)_{h} \to M_n(\mathbb C)_{h}$ induces a continuous self-map $p_{\mathbb S}$ of ${\mathbb S}(M_n(\mathbb C)_{h})$ given by $p_{\mathbb S}(X)=p(X)/\|p(X)\|$. Note that ${\mathbb S}(M_n(\mathbb C)_{h})$ is just the $(n^2-1)$-dimensional sphere, in particular, it is an oriented topological manifold and continuous self-maps admit a topological degree (or Brouwer degree) as an integer-valued invariant depending only on the homotopy class of continuous maps. Our main result can be formulated as follows.

\begin{thm} \label{main}
Let $p \in {\rm Pol}(M_n (\mathbb C))$ be a self-adjoint matrix polynomial of degree $d$ such that
\begin{enumerate}
\item[(i)] $p^{(d)}$ is non-degenerate, and
\item[(ii)] the induced map $p^{(d)}_{\mathbb S} \colon {\mathbb S}(M_n(\mathbb C)_{h}) \to {\mathbb S}(M_n(\mathbb C)_{h})$ has non-zero topological degree.
\end{enumerate}
Then, there exists $X \in M_n(\mathbb C)_{h}$ such that $p(X)=0$.
\end{thm}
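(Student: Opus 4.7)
The plan is to argue by contradiction using a topological degree argument. Suppose that $p(X) \ne 0$ for every $X \in M_n(\mathbb{C})_h$; I aim to show that this forces the degree of $p^{(d)}_{\mathbb S}$ to vanish, contradicting hypothesis (ii).

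Under this assumption the map $\Psi \colon M_n(\mathbb{C})_h \to {\mathbb S}(M_n(\mathbb{C})_h)$ given by $\Psi(X) = \sqrt{n}\, p(X)/\|p(X)\|$ is well-defined and continuous on all of $M_n(\mathbb{C})_h$. For any $R > 0$ the restriction of $\Psi$ to the sphere $R \cdot {\mathbb S}(M_n(\mathbb{C})_h)$ extends continuously to the closed ball of radius $R$, which is contractible; hence this restriction is null-homotopic. Identifying the sphere of radius $R$ with the unit sphere via $X \mapsto X/R$, the induced self-map $Y \mapsto \Psi(RY)$ of ${\mathbb S}(M_n(\mathbb{C})_h)$ therefore has Brouwer degree $0$.

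Next I would compare $Y \mapsto \Psi(RY)$ with $p^{(d)}_{\mathbb S}$ for large $R$. Writing $p = p^{(d)} + q$ with $\deg q \le d-1$ and using homogeneity of each lower-degree component, one obtains for $Y \in {\mathbb S}(M_n(\mathbb{C})_h)$
\[
R^{-d}\, p(RY) = p^{(d)}(Y) + R^{-d}\, q(RY),
\]
where the error term tends to $0$ uniformly in $Y$ as $R \to \infty$ since it is a finite sum of terms of the form $R^{k-d} q^{(k)}(Y)$ with $k < d$. Non-degeneracy of $p^{(d)}$ combined with compactness of the unit sphere yields $\delta := \min_{Y \in {\mathbb S}(M_n(\mathbb{C})_h)} \|p^{(d)}(Y)\| > 0$. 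Therefore, for all sufficiently large $R$, the straight-line homotopy
\[
H_t(Y) = (1-t)\, p^{(d)}(Y) + t\, R^{-d}\, p(RY), \qquad t \in [0,1],
\]
takes values in $M_n(\mathbb{C})_h \setminus \{0\}$, and normalizing gives a homotopy of sphere self-maps between $p^{(d)}_{\mathbb S}$ and $Y \mapsto \Psi(RY)$. The two maps thus share the same topological degree, so $\deg p^{(d)}_{\mathbb S} = 0$, contradicting (ii).

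The main technical point I expect to verify carefully is the homotopy estimate: one needs the uniform smallness of $R^{-d} q(RY)$ on the unit sphere together with the uniform lower bound $\delta$ on $\|p^{(d)}\|$ to ensure $H_t(Y)$ never vanishes. These two ingredients are exactly where both hypotheses of the theorem enter, since without non-degeneracy one could not guarantee that the leading term dominates along the deformation.
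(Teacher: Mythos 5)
Your proof is correct and follows essentially the same route as the paper: both arguments normalize $p$ to get a sphere self-map that is simultaneously null-homotopic (you via extension to the closed ball, the paper via the homotopy starting at the constant map $p_0$) and homotopic for large radius to $p^{(d)}_{\mathbb S}$, yielding the same contradiction with (ii). You simply spell out the uniform estimate on the lower-order terms and the lower bound $\delta$ that the paper leaves as ``it is easy to see.''
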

\begin{proof}
Assume that there is no $X \in M_n(\mathbb C)_{h}$ such that $p(X)=0$. Then, for $t \in [0,\infty)$, we define a continuous family of continuous maps $p_t \colon {\mathbb S}(M_n(\mathbb C)_{h}) \to  {\mathbb S}(M_n(\mathbb C)_{h})$ by the formula
$$p_t(X) = \frac{p(tX)}{\|p(tX)\|}.$$ This defines a homotopy which has a limit as $t$ tends to infinity, in fact it is easy to see that
$$\lim_{t \to \infty} p_t(X)= p_{\mathbb S}^{(d)}(X).$$ Since $p_0$ is constant and $p^{(d)}_{\mathbb S}$ is assumed to have non-zero topological degree, we obtain a contradiction.
\end{proof}

The previous result can be compared to the proof of the fundamental theorem of algebra using basic notions of algebraic topology that became known in the 1930's and various extensions of it to the quaternions \cite{en1, ni1} and octonions \cite{rod}. Despite the vivid interest in polynomial equations in control theory, an extension of the methods to matrix algebras has to the best of our knowledge previously not been observed. In the next part of the article we will be concerned with computations of the topological degree of the map associated with the leading form of a matrix polynomial.

\begin{prop} \label{prop1}
For any odd $d \in \mathbb N$ and $n \geq 2$, the map $M_n(\mathbb C) \ni X \mapsto X^d \in M_n(\mathbb C)$ induces a continuous map of topological degree one on ${\mathbb S}(M_n(\mathbb C)_{h})$.
\end{prop}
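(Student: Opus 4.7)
The plan is to exhibit an explicit linear homotopy on $\mathbb{S}(M_n(\mathbb{C})_{h})$ between the identity self-map and the map induced by $X \mapsto X^d$, from which the claim follows by homotopy invariance of the topological degree, since the identity on a sphere has degree one.

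First I would verify that the $d$-th power map is non-degenerate on self-adjoint matrices: any $X=X^*$ is diagonalizable with real spectrum, so $X^d=0$ forces every eigenvalue to satisfy $\lambda^d=0$, hence $X=0$. Consequently $X \mapsto X^d/\|X^d\|$ is a well-defined continuous self-map of $\mathbb{S}(M_n(\mathbb{C})_{h})$.

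Next, for $t \in [0,1]$, define the continuous family of matrix polynomial maps
$$h_t(X) := (1-t)\,X + t\,X^d.$$
Each $h_t$ sends $M_n(\mathbb{C})_{h}$ into itself. The key step is to check that every $h_t$ is non-degenerate on self-adjoints. For $X=X^*$ with an eigenvalue $\lambda \neq 0$, the corresponding eigenvalue of $h_t(X)$ equals
$$\lambda\bigl((1-t) + t\,\lambda^{d-1}\bigr).$$
Here I use the oddness of $d$: since $d-1$ is a non-negative even integer, $\lambda^{d-1} > 0$ for every $\lambda \neq 0$, so the second factor is a strictly positive convex combination and the eigenvalue is non-zero. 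Hence $h_t(X) \neq 0$ for $X \neq 0$.

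Therefore $(X,t) \mapsto h_t(X)/\|h_t(X)\|$ is a continuous homotopy on $\mathbb{S}(M_n(\mathbb{C})_{h})$ from the identity (at $t=0$) to the map induced by $X \mapsto X^d$ (at $t=1$), and both maps have the same topological degree, which is one. The main obstacle is precisely the non-degeneracy of the interpolation; this is where the hypothesis that $d$ is odd becomes essential, as for even $d$ the map $X\mapsto X^d$ already fails to be non-degenerate and indeed lands inside the cone of positive semidefinite matrices.
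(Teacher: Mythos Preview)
Your argument is correct. The linear interpolation $h_t(X)=(1-t)X+tX^d$ acts diagonally in any eigenbasis of $X$, and the oddness of $d$ makes each nonzero eigenvalue survive for every $t\in[0,1]$, so the normalized family is a well-defined homotopy on the sphere to the identity. The conclusion then follows from homotopy invariance of the degree.

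The paper proceeds differently: it observes that for odd $d$ the map $X\mapsto X^d$ is a homeomorphism of $M_n(\mathbb C)_h$ (the inverse being the self-adjoint $d$-th root via functional calculus), hence the induced sphere map has degree $\pm1$, and then checks orientation at the fixed point $1_n$, where the derivative is $d\cdot\mathrm{id}$, to pin down the sign. Your route avoids both the global injectivity statement and any tangent-map/orientation computation, trading them for a direct spectral homotopy; the paper's route, on the other hand, yields the stronger side fact that the $d$-th power is a homeomorphism. Both arguments use the parity of $d$ in an essential way: yours through $\lambda^{d-1}>0$, theirs through the existence of a real $d$-th root.
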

\begin{proof}
It is easy to see that for odd $d$, the map $X \mapsto X^d$ is homeomorphism of $M_n(\mathbb C)_h$. Moreover the tangent map of the induced map on ${\mathbb S}(M_n(\mathbb C)_{h})$ at $X=1_n$ clearly preserves the orientation and thus $X \mapsto X^d$ is of topological degree one.
\end{proof}

In general, if the leading form is not given by a monomial expression, then it is hard to compute the topological degree exactly. However, if $d$ is odd, then non-degeneracy implies that the topological degree must be always non-zero.

\begin{prop} \label{prop2}
Let $d \in \mathbb N$ be odd and $n \geq 2$. Let $p \in {\rm Pol}(M_n(\mathbb C))$ be a self-adjoint, homogeneous and non-degenerate polynomial of degree $d$. Then, $p_{\mathbb S} \colon {\mathbb S}(M_n(\mathbb C)_{h}) \to {\mathbb S}(M_n(\mathbb C)_{h})$ has non-zero topological degree.
\end{prop}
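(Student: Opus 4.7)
The plan is to reduce the statement to a classical fact about antipodal self-maps of spheres, with the oddness of $d$ providing the crucial symmetry.

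First I would observe that since $p$ is homogeneous of odd degree $d$, for every $X \in M_n(\mathbb C)_h$ we have
$$p(-X) = (-1)^d p(X) = -p(X).$$
Non-degeneracy ensures that $p(X) \neq 0$ whenever $X \neq 0$, so $p_{\mathbb S}(X) = p(X)/\|p(X)\|$ is well-defined on $\mathbb S(M_n(\mathbb C)_h)$, and the displayed identity immediately yields
$$p_{\mathbb S}(-X) = -p_{\mathbb S}(X).$$
Thus $p_{\mathbb S}$ is an antipodal (odd) self-map of $\mathbb S(M_n(\mathbb C)_h)$, which for $n \geq 2$ is a sphere of positive dimension $n^2 - 1 \geq 3$.

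Next I would invoke the classical theorem of Borsuk: every continuous antipodal self-map of a positive-dimensional sphere $S^m$ has odd topological degree. This result, closely related to the Borsuk--Ulam theorem, can be established via a transfer argument in $\mathbb Z/2\mathbb Z$-equivariant cohomology, or by simplicial approximation combined with a parity count of orbits. Applied to $p_{\mathbb S}$, it yields that $\deg(p_{\mathbb S})$ is an odd integer, and in particular nonzero, as required. Combined with Theorem \ref{main}, this already implies that any self-adjoint matrix polynomial of odd degree with non-degenerate leading form admits a self-adjoint root.

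There is essentially no obstacle to this argument: the decisive point is recognizing that the oddness of the algebraic degree $d$ translates, via homogeneity, into antipodal symmetry of the normalized map $p_{\mathbb S}$, after which Borsuk's degree theorem supplies the conclusion for free. This approach is in fact somewhat stronger than needed, since it determines the degree modulo $2$ rather than merely ruling out zero.
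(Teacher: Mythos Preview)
Your proof is correct and takes a genuinely different, and considerably shorter, route than the paper's.

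The paper works on the associated real projective space $\mathbb{P}_{\mathbb{R}}(M_n(\mathbb{C})_h)$ and computes the $\mathbb{Z}/2\mathbb{Z}$-valued degree by counting generic pre-images. To do this it first perturbs the $n^2$ real coordinate polynomials (appealing to the multipolynomial resultant) so that they have no common nontrivial complex zero, then invokes a B\'ezout-type count giving $d^{n^2-1}$ complex pre-images of a generic point; since $d$ is odd this number is odd, and self-adjointness forces the non-real pre-images to pair off under complex conjugation, so the number of real pre-images is odd as well. Your argument bypasses all of this algebraic geometry: you simply observe that homogeneity of odd degree makes $p_{\mathbb{S}}$ antipodal and then apply Borsuk's odd-mapping theorem. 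Both approaches yield the same refinement (degree odd, not merely nonzero). The paper's argument is more explicit about the algebro-geometric source of the degree, and its conjugation-pairing step uses self-adjointness in a second, essential way beyond ensuring that $p_{\mathbb{S}}$ is a self-map; your approach trades that for simplicity and avoids the resultant and perturbation machinery entirely.
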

\begin{proof}
The associated real projective space ${\mathbb P}_{\mathbb R}(M_n(\mathbb C)_{h})$ is a closed topological manifold whose top-dimensional homology group is isomorphic to $\mathbb Z$ if $n$ is odd and $\mathbb Z/2 \mathbb Z$ if $n$ is even. Thus, every continuous self-map of ${\mathbb P}_{\mathbb R}(M_n(\mathbb C)_{h})$ admits a $(\mathbb Z/2 \mathbb Z)$-valued topological degree, which is the reduction modulo two of the topological degree of the map on spheres. Again, the $(\mathbb Z/2 \mathbb Z)$-valued topological degree only depends on the homotopy class of continuous maps. If there are no multiplicities, then this topological degree on the level of real projective spaces is equal to the number of generic pre-images modulo two. In our situation, since we assume that the homogeneous part of highest degree is non-degenerate, we have $n^2$ homogeneous real polynomials $(p_{ij})_{i,j=1}^n$ of the same degree $d$ that do not have a common non-trivial real zero. A priori, this does not mean that they could not have a non-trivial common complex zero. However, we claim that we can make an arbitrarily small real perturbation $(p_{t,ij})_{i,j=1}^n$ for $t \in [0,1]$ such that $p_{0,ij} = p_{ij}$, $t \mapsto p_{t,ij}$ is continuous for $1 \leq i,j \leq n$, and $(p_{t,ij})_{i,j=1}^n$ has no common non-trivial complex zero for all $t \in (0,1]$. Indeed, the space of $n^2$-tuples of homogeneous polynomials of the same degree $d$ form a vector space and a family $(q_{ij})_{i,j=1}^n$ has a common complex zero if and only if the multipolynomial resultant, an integer polynomial in the coefficients of the polynomials $(q_{ij})_{i,j=1}^n$ vanishes. See \cite{gkz, wz} for a definition and a detailed discussion of the properties of the multipolynomial resultant. Since the multipolynomial resultant is not entirely zero, it cannot vanish on \emph{all} real polynomials as they form a Zariski dense subset. Thus, the set of real polynomials for which it is zero is of codimension one in the space of real polynomials. It now follows easily that one can perturb the given family $(p_{ij})_{i,j=1}^n$ in the way described above. Note that $(p_{t,ij})_{i,j=1}^n$ for $t \in [0,1]$ is a continuous family of continuous self-maps of the real projective space of the same topological degree.

Thus, if we are only interested in non-vanishing of the topological degree of the self-map of ${\mathbb P}_{\mathbb R}(M_n(\mathbb C)_{h})$, we may assume without loss of generality that the family $(p_{ij})_{i,j=1}^n$ has no non-trivial common complex zero. Now, consider also the associated polynomial map 
$$\bar p_{\mathbb S} \colon {\mathbb P}_{\mathbb C}(M_n(\mathbb C)) \to {\mathbb P}_{\mathbb C}(M_n(\mathbb C))$$ on the complex projective space. This is a polynomial map given by $n^2$ homogeneous polynomials of degree $d$ and thus, a generic point in ${\mathbb P}_{\mathbb C}(M_n(\mathbb C))$ will have $d^{n^2-1}$ pre-images without multiplicities. Again, using an analogous argument as above, this will also be true for generic points in ${\mathbb P}_{\mathbb R}(M_n(\mathbb C)_h)$. In particular, since $d$ is odd this number is odd. Since $p$ is self-adjoint, the non-real pre-images of a real point come in conjugate pairs. Thus, we can conclude that the number of real pre-images of $\bar p_{\mathbb S}$ of a point in ${\mathbb P}_{\mathbb R}(M_n(\mathbb C)_{h})$ is always odd without multiplicities. Hence, the continuous map
$\bar p_{\mathbb S} \colon {\mathbb P}_{\mathbb R}(M_n(\mathbb C)_{h}) \to {\mathbb P}_{\mathbb R}(M_n(\mathbb C)_{h})$ has a non-zero $(\mathbb Z/2\mathbb Z)$-valued topological degree. We conclude that the topological degree of $p_{\mathbb S}$ on the level of spheres must be non-zero. This finishes the proof.
\end{proof}

Note that in general a polynomial $p \in {\rm Pol}(M_n(\mathbb C))$ homogeneous of degree $d$ will  -- even if non-degenerate in our sense -- only determine a rational self-map of the complex projective space ${\mathbb P}_{\mathbb C}(M_n(\mathbb C))$ and there is no reason to think that it would have degree $d^{n^2-1}$. Indeed, consider the map $X \mapsto X^d$ on $M_n(\mathbb C)$. A generic $(n \times n)$-matrix has  exactly $d^n$ different $d$-th roots and the generic number of pre-images as a rational self-map of ${\mathbb P}_{\mathbb C}(M_n(\mathbb C))$ is $d^{n-1}$. This phenomenon is due to the non-triviality of the singularity $\{X \in M_n (\mathbb C) \mid X^d=0\}$ -- in this case it not even zero-dimensional. However, enough \emph{oddity} survives for our arguments to go through also in this case -- as we seen in Proposition \ref{prop1}, in this case the topological degree is equal to 1 for all $d \in \mathbb N$.

\begin{crl} \label{main2}
Let $d \in \mathbb N$ be odd and let $p \in  {\rm Pol}(M_n (\mathbb C))$ be a self-adjoint matrix polynomial of degree $d$ and assume that $p^{(d)}$ is non-degenerate. Then, there exists a self-adjoint matrix $X \in M_n(\mathbb C)_h$ such that $p(X)=0.$
\end{crl}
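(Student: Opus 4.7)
The plan is simply to assemble the machinery already developed in the excerpt; this corollary is designed to be the synthesis of Theorem~\ref{main} and Proposition~\ref{prop2}. First I would dispose of the trivial case $n=1$ separately: here $M_1(\mathbb{C})_h$ is just $\mathbb{R}$, the self-adjoint polynomial $p$ reduces to a real univariate polynomial of odd degree whose leading coefficient is nonzero (that is exactly non-degeneracy of $p^{(d)}$ in this setting), and the intermediate value theorem hands us a real root. So the substance lies in $n \geq 2$.

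For $n \geq 2$, I would first verify that the leading form $p^{(d)}$ satisfies the hypotheses of Proposition~\ref{prop2}. It is homogeneous of degree $d$ by construction; it is non-degenerate by assumption; and it is self-adjoint because the involution on ${\rm Pol}(M_n(\mathbb{C}))$ preserves the grading (the explicit formula $A_0 \otimes \cdots \otimes A_d \mapsto A_d^* \otimes \cdots \otimes A_0^*$ sends the degree-$d$ part to itself), so $p^*=p$ forces $(p^{(d)})^*=p^{(d)}$. Since $d$ is odd, Proposition~\ref{prop2} applies and yields that the induced map $p^{(d)}_{\mathbb{S}} \colon \mathbb{S}(M_n(\mathbb{C})_h) \to \mathbb{S}(M_n(\mathbb{C})_h)$ has non-zero topological degree.

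With both conditions (i) and (ii) of Theorem~\ref{main} now in hand, I would invoke that theorem directly to conclude the existence of a self-adjoint $X \in M_n(\mathbb{C})_h$ with $p(X) = 0$.

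Since the entire argument is a straightforward chaining of the two earlier results, there is no real obstacle. The only conceptual point worth noticing is that the oddness of $d$ is being used essentially twice in the underlying mechanism: it enters Proposition~\ref{prop2} through the parity of $d^{n^2-1}$ (so that the $\mathbb{Z}/2\mathbb{Z}$-valued degree on real projective space is nonzero), and this in turn is what unlocks the homotopy-to-infinity argument of Theorem~\ref{main}. The corollary is essentially the statement that the Brouwer-degree obstruction is automatically absent in the odd-degree self-adjoint case.
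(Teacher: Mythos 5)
Your proof is correct and takes exactly the route the paper intends: combine Proposition~\ref{prop2} (the leading form has non-zero topological degree) with Theorem~\ref{main} (non-zero degree of the leading form forces a zero of $p$). Your careful handling of the $n=1$ case, where Proposition~\ref{prop2} does not apply and one falls back on the intermediate value theorem, is a sound observation that the paper leaves implicit.
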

We can now partially answer the question from the beginning of this article.
The leading form of the matrix polynomial
$p(X) = XAXAX + X^2 + BXCXB + DXD -X + E$ is $p^{(3)}(X)=XAXAX$. Thus, Corollary \ref{main2} applies if the equation $XAXAX=0$ implies that $X=0$ for self-adjoint $X \in M_n(\mathbb C)$. This holds for example when $A$ is positive definite. Indeed, we easily compute:
$$XAXAX=0 \Rightarrow A^{1/2}XAXAXA^{1/2}=0 \Rightarrow (A^{1/2}XA^{1/2})^3=0 \Rightarrow A^{1/2}XA^{1/2} \Rightarrow X=0.$$
On the other side, the equation $XAXAX+1_n=0$ implies that $A$ is invertible if a solution exists. Thus, some non-degeneracy condition on the leading form is necessary. Whenever the matrix $A$ is indefinite, one can show that the leading form $XAXAX$ is degenerate and also that the equation $XAXAX + E=0$  cannot always be solved in self-adjoint matrices -- thus, non-degeneracy of the leading form is necessary and sufficient in this special case. However, in general, it is unclear if (for $d$ odd) non-degeneracy is the minimal assumption on the leading form to ensure existence for solution for all self-adjoint polynomials with this leading form. An obvious necessary condition is that the leading form be surjective as a map on self-adjoint matrices.

If $p =p^*\in {\rm Pol}(M_n (\mathbb C))$ is of even degree equal to $d$, then the map $p^{(d)}_{\mathbb S}$ will factor through the associated real projective space. Thus, the topological degree of the induced map on spheres will be zero if $n$ is odd. Indeed, if $n$ is odd, then the $(n^2-1)$-dimensional projective space is not orientable and hence degrees of maps must vanish. However, if $n$ is even, the degree of the associated map on spheres can be a non-zero even integer. Indeed, one can show for example that the quadratic map
$$\left(\begin{matrix} a_{11} & a_{12} \\ a_{21} & a_{22} \end{matrix} \right) \mapsto \left(\begin{matrix} a_{11}^2 - a^2_{22} - a_{12}a_{21} & 2 a_{11} a_{12} \\ 2 a_{11} a_{21} & a_{11}a_{22}\end{matrix} \right)$$
is non-degenerate and induces a map of degree two on the three-dimensional unit-sphere of $M_2(\mathbb C)_h$. The associated self-adjoint element in ${\rm Pol}(M_2(\mathbb C))$ is
$$e_{11} \otimes e_{11} \otimes e_{11} - e_{12} \otimes e_{22} \ \otimes e_{21} - e_{11} \otimes e_{22} \otimes e_{11} + 2 e_{11} \otimes e_{11} \otimes e_{22} $$
$$+ \ 2 e_{22} \otimes e_{11} \otimes e_{11} + \frac12(e_{21} \otimes e_{12} \otimes e_{22} + e_{22} \otimes e_{21} \otimes e_{12}),$$
where $e_{ij}$ for $1 \leq i,j \leq 2$ denote the standard matrix units in $M_2(\mathbb C)$. Thus any self-adjoint quadratic equation with  leading term like this can be solved in self-adjoint matrices according to Theorem \ref{main}.

In general, polynomial self-maps  of odd-dimensional sphere of topological degree $d$ were constructed by Wood \cite{wood} using homogeneous polynomials of degree $|d|$. For even-dimensional spheres and $d$ odd there is a construction by Turiel \cite{turiel} using homogeneous polynomials of degree $2|d|-1$.

\vspace{0.2cm}

We want to spend the rest of the article to explain how the results generalize to the case of many variables. There are various ways to apply the general technique -- however, we will focus only on one possibility. Let $k \in \mathbb N$ and denote by $\mathbb C\langle x_1,\dots,x_k \rangle$ the free unital $\mathbb C$-algebra with generators $x_1,\dots,x_k$. A free matrix polynomial in $k$ variables is an element in the unital ring 
$$M_n(\mathbb C) \ast_{\mathbb C} \mathbb C\langle x_1,\dots,x_k \rangle,$$
that arises as an amalgamated free product of rings. A typical element in the ring $M_n(\mathbb C) \ast_{\mathbb C} \mathbb C\langle x_1,\dots,x_k \rangle$ is a linear combination of monomials of the form $A_0 x_{i_1} A_1 x_{i_2} \cdots x_{i_d} A_d$ with $i_l \in \{1,\dots,k\}$ for $1 \leq l \leq d$ and $A_0,\dots,A_d \in M_n(\mathbb C)$. Note that this ring is just ${\rm Pol}(M_n(\mathbb C))$ in the case $k=1$; however note that as discussed before, some elements $p \in M_n(\mathbb C) \ast_{\mathbb C} \mathbb C\langle x \rangle$ evaluate trivially when considered as polynomial maps $X \mapsto p(X)$ on $M_n(\mathbb C)$. The adjoint, the degree, and the leading form of some element $p \in M_n(\mathbb C) \ast_{\mathbb C} \mathbb C\langle x_1,\dots,x_k \rangle$ are defined in a straightforward way -- however, as before the degree will not necessarily agree with the degree of the associated polynomial map from $M_n(\mathbb C)^k$ to $M_n(\mathbb C)$. We say that a $k$-tuple of homogeneous and self-adjoint free matrix polynomials $p_1,\dots,p_k$ is non-degenerate if for self-adjoint matrices $X_1,\dots,X_k \in M_n(\mathbb C)_h$ the implication
$$p_i(X_1,\dots,X_k)=0, \quad \forall 1 \leq i \leq k \quad \Longrightarrow \quad X_i=0, \quad \forall1 \leq i \leq k$$
holds.
The following generalization of Corollary \ref{main2} is now straightforward.

\begin{thm} \label{main3}
Let $k \in \mathbb N$ and let $p_1,\dots,p_k$ be self-adjoint free matrix polynomials of odd degree in $k$ variables. If the $k$-tuple of leading forms is non-degenerate, then there exist self-adjoint matrices $X_1,\dots,X_k \in M_n(\mathbb C)_h$, such that $p_i(X_1,\dots,X_k)=0$, for all $1 \leq i \leq k.$
\end{thm}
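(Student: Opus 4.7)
The plan is to reduce to the case of uniform degree and then directly mimic the strategies of Theorem \ref{main} and Proposition \ref{prop2}. Let $d_i = \deg p_i$ (odd for each $i$) and $D = \mathrm{lcm}(d_1,\dots,d_k)$; both $D$ and each $D/d_i$ are odd. Set $q_i = p_i^{D/d_i}$, the $(D/d_i)$-fold product of $p_i$ with itself inside the free matrix polynomial ring. Each $q_i$ is self-adjoint of degree $D$, with leading form $(p_i^{(d_i)})^{D/d_i}$. The key elementary observation is that a self-adjoint matrix $A$ satisfying $A^m=0$ for some $m\ge 1$ must vanish. Applied to $A=p_i(X)$ and to $A=p_i^{(d_i)}(X)$, both of which are self-adjoint when $X$ is, this shows that $(q_i)_{i=1}^k$ has the same common self-adjoint zeros as $(p_i)_{i=1}^k$ and that $(q_i^{(D)})_{i=1}^k$ is non-degenerate if and only if $(p_i^{(d_i)})_{i=1}^k$ is. Hence I may assume $d_1=\cdots=d_k=D$.

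With uniform degree $D$, set $W = M_n(\mathbb{C})_h^k$ and $P = (p_1,\dots,p_k)\colon W\to W$. The leading form $P^{(D)}$ is now honestly homogeneous of degree $D$, so by non-degeneracy it induces a continuous self-map $P^{(D)}_{\mathbb{S}}$ of $\mathbb{S}(W)$. Assume for contradiction that $(p_1,\dots,p_k)$ has no common self-adjoint zero. Then $P_t(X) = P(tX)/\|P(tX)\|$ defines a continuous homotopy of self-maps of $\mathbb{S}(W)$ for $t\in[0,\infty]$, starting at the constant map $X\mapsto P(0)/\|P(0)\|$ at $t=0$ (note that $P(0)\ne 0$ because $0$ is not a zero by assumption) and extending continuously to $P^{(D)}_{\mathbb{S}}$ as $t\to\infty$, exactly as in the proof of Theorem \ref{main}. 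Consequently $P^{(D)}_{\mathbb{S}}$ is null-homotopic and has vanishing topological degree.

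The contradiction will follow from showing that $P^{(D)}_{\mathbb{S}}$ has odd, and in particular nonzero, topological degree, by rerunning the argument of Proposition \ref{prop2} with $kn^2$ polynomials in $kn^2$ variables in place of $n^2$ in $n^2$. The $kn^2$ real scalar components of $P^{(D)}$ are homogeneous of common degree $D$ in $kn^2$ real variables and have no non-trivial common real zero. By the multipolynomial resultant, an arbitrarily small generic real perturbation additionally kills all non-trivial common complex zeros while keeping $P^{(D)}_{\mathbb{S}}$ in the same homotopy class. For the perturbed tuple, Bezout's theorem in $\mathbb{P}^{kn^2-1}_{\mathbb{C}}$ yields $D^{kn^2-1}$ pre-images of a generic complex point (with no contribution at infinity, since no common complex zeros remain); this is odd because $D$ is, and complex non-real pre-images of a real point pair off under complex conjugation, so the real pre-image count is also odd. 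Therefore the $\mathbb{Z}/2\mathbb{Z}$-degree on $\mathbb{P}_{\mathbb{R}}(W)$ is non-zero, and hence the $\mathbb{Z}$-degree of $P^{(D)}_{\mathbb{S}}$ on $\mathbb{S}(W)$ is odd, contradicting the previous paragraph. The two substantive ingredients are the reduction step (which rests on self-adjoint nilpotents vanishing) and the Bezout-plus-perturbation counting; the latter is a routine scale-up of Proposition \ref{prop2} once the former is in hand.
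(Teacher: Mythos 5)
Your proof follows the same route as the paper's: reduce to a uniform odd degree $D=\mathrm{lcm}(d_1,\dots,d_k)$ by raising each $p_i$ to the power $D/d_i$, then rerun the homotopy argument of Theorem~\ref{main} and the resultant/Bezout degree count of Proposition~\ref{prop2} on $\mathbb{S}\bigl(M_n(\mathbb{C})_h^k\bigr)$ with $kn^2$ real polynomial components in $kn^2$ real variables. You usefully make explicit the step the paper leaves tacit --- that raising to a power preserves both the common self-adjoint zero set and the non-degeneracy of the leading forms, because a self-adjoint nilpotent matrix must vanish --- which is precisely what legitimizes the reduction.
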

\begin{proof}
Let $d_i$ be the degree of the free matrix polynomial $p_i \in M_n(\mathbb C) \ast_{\mathbb C} \mathbb C\langle X_1,\dots,X_k \rangle$. By taking an appropriate power of $p_i$, we may replace $d_i$ by ${\rm lcm}(d_1, \dots,d_k)$. Thus, we may assume without loss of generality the degrees are all equal. We may now repeat the arguments that led to the proof of Corollary \ref{main2} starting with the polynomial map
$$M_n(\mathbb C)_h^k \ni (X_1,\dots,X_k) \mapsto (p_1(X_1,\dots,X_k), \dots, p_k(X_1,\dots,X_k)) \in M_n(\mathbb C)_h^k.$$
This ends the outline of the proof.
\end{proof}

\begin{exm}
The previous result applies for example whenever $p,q \in M_n(\mathbb C) \ast_{\mathbb C} \mathbb C\langle x,y \rangle$ are quadratic self-adjoint free matrix polynomials. In this case, we can conclude from Theorem \ref{main3} that there exist self-adjoint matrices $X,Y \in M_n(\mathbb C)_h$ such that:
$$X^3 + p(X,Y) = 0, \quad Y^3 + q(X,Y)=0.$$
\end{exm}

\section*{Acknowledgments}

This research was supported by ERC Starting Grant 277728. The second author wants to thank Jos\'e Burgos Gil and Carlos D'Andrea for an interesting discussion about the proof of Proposition \ref{prop2}.

\end{document}